\documentclass[10pt]{article}
\usepackage[margin=0.85in]{geometry}
\usepackage{amsmath,amssymb,amsthm}
\usepackage{booktabs}
\usepackage{float}
\usepackage{microtype}
\usepackage{tikz}
\usepackage[hidelinks]{hyperref}
\hypersetup{
  pdftitle={The Truncated Octahedral Graph Has Bondage Number Five},
  pdfauthor={Prateek R. Srivastava}
}

\newtheorem{theorem}{Theorem}
\newtheorem{lemma}{Lemma}
\newcommand{\gammav}{\gamma}
\newcommand{\Deltaa}{\Delta}
\title{The Truncated Octahedral Graph Has Bondage Number Five}
\author{Prateek R. Srivastava\thanks{ORCID:
\href{https://orcid.org/0000-0002-5046-0465}{0000-0002-5046-0465}.}\\
\small Independent researcher\\[-2pt]
\small \texttt{prs7786@g.rit.edu}}
\date{August 18, 2026}

\begin{document}
\maketitle

\begin{abstract}
For a graph $G$, its bondage number $b(G)$ is the minimum number of edges whose
deletion increases its domination number.  Dunbar, Haynes, Teschner, and
Volkmann conjectured in 1998 that every nontrivial planar graph satisfies
$b(G)\leq \Deltaa(G)+1$.  We show that the truncated octahedral graph $T$ has
$\gammav(T)=8$ and $b(T)=5$.  Since $T$ is planar and cubic, this gives
$b(T)=5>4=\Deltaa(T)+1$ and disproves the conjecture.  The finite parts of the
verification are exhaustive: the direct verifier checks candidate dominating
sets of sizes six, seven, and eight and all
$\binom{36}{4}=58{,}905$ four-edge sets.  The targeted discovery search and an
independently written verifier are described, and the complete C++20 verifier
is included in the source archive.
\end{abstract}

\section{Introduction}

A set $D\subseteq V(G)$ is \emph{dominating} if every vertex outside $D$ has a
neighbor in $D$.  The minimum cardinality of such a set is the domination
number $\gammav(G)$.  The \emph{bondage number}, introduced by Fink et
al.~\cite{Fink1990}, is
\[
 b(G)=\min\{|F|:F\subseteq E(G),\ \gammav(G-F)>\gammav(G)\}.
\]
In 1998 Dunbar, Haynes, Teschner, and Volkmann made the following conjecture
\cite{Dunbar1998}.

\medskip
\noindent\textbf{Planar bondage conjecture.}
Every nontrivial connected planar graph $G$ satisfies
$b(G)\leq\Deltaa(G)+1$.
\medskip

Kang and Yuan proved the near-sharp bound
$b(G)\leq\min\{8,\Deltaa(G)+2\}$ for planar graphs~\cite{Kang2000}.
Subsequent work verified the conjecture for several planar classes
\cite{Fischermann2003}; see Xu's survey~\cite{Xu2013}.  Thus a cubic
counterexample at the boundary of the Kang--Yuan bound must have bondage
number five.  This observation reduces the search to a sharply defined
extremal question.  The example below is the skeleton of an Archimedean solid.

\section{Targeted discovery strategy}\label{sec:strategy}

The search was organized around two tight inequalities, not around an
unrestricted census of graphs.  First, the Kang--Yuan theorem shows that a
cubic planar counterexample can only occur at $b(G)=5$.  Second, Hartnell and
Rall proved that for every edge $uv$,
\[
b(G)\leq d(u)+d(v)-1-|N(u)\cap N(v)|. \tag{1}
\]
For a triangle-free cubic graph the right side is also five
\cite{Hartnell1994}.  The target class was therefore chosen to be
triangle-free cubic polyhedral graphs that attain equality in (1).

Such graphs are the planar duals of simple triangulations of minimum degree
at least four.  They were generated without isomorphic duplication using
\textsc{plantri}~\cite{Brinkmann2007}.  For each graph, all minimum dominating
sets were encoded as bit masks.  If $D$ is a minimum dominating set and
$x\notin D$, define
\[
B(D,x)=\{xy\in E(G):y\in D\}.
\]
An edge set $F$ destroys $D$ precisely when $B(D,x)\subseteq F$ for some
$x\notin D$.  Hence the question whether at most four edges destroy every
minimum dominating set becomes a small exact hitting problem over these
bundles.  A branch search always selected an unhit dominating set with the
fewest feasible bundles.  This is the principal contraction of the search
space: it avoids enumerating every edge subset separately for every generated
graph.

The isomorphism-free census through order $24$ is summarized below.  The
``survivors'' row counts graphs for which the bundle search proved that no
set of at most four edges destroys all minimum dominating sets.
\begin{center}
\begin{tabular}{c@{\qquad}rrrrrrrrr}
\toprule
$|V(G)|$ & 8&10&12&14&16&18&20&22&24\\
\midrule
graphs tested &1&1&2&5&12&34&130&525&2472\\
survivors &0&0&0&0&0&0&0&0&1\\
\bottomrule
\end{tabular}
\end{center}
Canonical labeling with \textsc{nauty}~\cite{McKay2014} identified the unique
survivor as the truncated octahedral graph.  The census guided discovery only;
the theorem below is established from the explicit graph by a separate direct
verification.

\section{The graph and its domination number}

Let $T$ be the graph in Figure~\ref{fig:T}.  Its $24$ vertices and $36$ edges
form the skeleton of the truncated octahedron.  The displayed crossing-free
embedding has six quadrilateral and eight hexagonal faces, so $T$ is planar;
every vertex has degree three.  The figure was generated directly from the
edge list by a Tutte barycentric embedding: the hexagon
$0,1,5,11,6,2$ was fixed as a convex outer face and the remaining coordinates
were solved over the rational numbers.  Exact orientation predicates check
that all vertices are distinct, no vertex lies on a nonincident edge, and no
two nonincident edges cross.  The checks are repeated after rounding to the
six-decimal coordinates passed to TikZ; the induced rotation system also
confirms six four-faces and eight six-faces.  The labels in the figure are
used throughout.
For an entirely textual specification, its edge set is
\begin{center}\small
$\begin{aligned}
E(T)=\{&(0,1),(0,2),(0,3),(1,4),(1,5),(2,6),(2,7),(3,4),(3,8),\\
&(4,9),(5,10),(5,11),(6,11),(6,12),(7,12),(7,13),(8,13),(8,14),\\
&(9,15),(9,16),(10,16),(10,17),(11,17),(12,18),(13,19),(14,15),(14,19),\\
&(15,20),(16,20),(17,21),(18,21),(18,22),(19,22),(20,23),(21,23),(22,23)\}.
\end{aligned}$
\end{center}

\begin{figure}[H]
\centering
\begin{tikzpicture}[scale=0.72,
  vertex/.style={circle,draw=black,fill=white,inner sep=0pt,minimum size=4.8mm,font=\scriptsize},
  dom/.style={vertex,fill=blue!65,text=white},
  every path/.style={line cap=round}]
\coordinate (v0) at (0.000000,6.000000);
\coordinate (v1) at (-6.000000,3.000000);
\coordinate (v2) at (6.000000,3.000000);
\coordinate (v3) at (-0.887160,3.385214);
\coordinate (v4) at (-2.941634,2.357977);
\coordinate (v5) at (-6.000000,-3.000000);
\coordinate (v6) at (6.000000,-3.000000);
\coordinate (v7) at (3.828794,1.027237);
\coordinate (v8) at (0.280156,1.797665);
\coordinate (v9) at (-1.937743,0.688716);
\coordinate (v10) at (-2.941634,-2.357977);
\coordinate (v11) at (0.000000,-6.000000);
\coordinate (v12) at (3.828794,-1.027237);
\coordinate (v13) at (1.657588,1.108949);
\coordinate (v14) at (0.070039,0.898833);
\coordinate (v15) at (-0.933852,0.396887);
\coordinate (v16) at (-1.937743,-0.688716);
\coordinate (v17) at (-0.887160,-3.385214);
\coordinate (v18) at (1.657588,-1.108949);
\coordinate (v19) at (0.863813,0.501946);
\coordinate (v20) at (-0.933852,-0.396887);
\coordinate (v21) at (0.280156,-1.797665);
\coordinate (v22) at (0.863813,-0.501946);
\coordinate (v23) at (0.070039,-0.898833);
\draw[gray!70] (v0)--(v1);
\draw[gray!70] (v0)--(v2);
\draw[gray!70] (v0)--(v3);
\draw[gray!70] (v1)--(v4);
\draw[gray!70] (v1)--(v5);
\draw[gray!70] (v2)--(v6);
\draw[gray!70] (v2)--(v7);
\draw[gray!70] (v3)--(v4);
\draw[gray!70] (v3)--(v8);
\draw[gray!70] (v4)--(v9);
\draw[gray!70] (v5)--(v10);
\draw[gray!70] (v5)--(v11);
\draw[gray!70] (v6)--(v11);
\draw[gray!70] (v6)--(v12);
\draw[gray!70] (v7)--(v12);
\draw[gray!70] (v7)--(v13);
\draw[gray!70] (v8)--(v13);
\draw[gray!70] (v8)--(v14);
\draw[gray!70] (v9)--(v15);
\draw[gray!70] (v9)--(v16);
\draw[gray!70] (v10)--(v16);
\draw[gray!70] (v10)--(v17);
\draw[gray!70] (v11)--(v17);
\draw[gray!70] (v12)--(v18);
\draw[gray!70] (v13)--(v19);
\draw[gray!70] (v14)--(v15);
\draw[gray!70] (v14)--(v19);
\draw[gray!70] (v15)--(v20);
\draw[gray!70] (v16)--(v20);
\draw[gray!70] (v17)--(v21);
\draw[gray!70] (v18)--(v21);
\draw[gray!70] (v18)--(v22);
\draw[gray!70] (v19)--(v22);
\draw[gray!70] (v20)--(v23);
\draw[gray!70] (v21)--(v23);
\draw[gray!70] (v22)--(v23);
\draw[red!75!black,very thick] (v0)--(v2);
\draw[red!75!black,very thick] (v1)--(v4);
\draw[red!75!black,very thick] (v1)--(v5);
\draw[red!75!black,very thick] (v2)--(v6);
\draw[red!75!black,very thick] (v2)--(v7);
\node[vertex] at (v0) {0};
\node[vertex] at (v1) {1};
\node[dom] at (v2) {2};
\node[vertex] at (v3) {3};
\node[dom] at (v4) {4};
\node[dom] at (v5) {5};
\node[vertex] at (v6) {6};
\node[vertex] at (v7) {7};
\node[dom] at (v8) {8};
\node[vertex] at (v9) {9};
\node[dom] at (v10) {10};
\node[vertex] at (v11) {11};
\node[vertex] at (v12) {12};
\node[dom] at (v13) {13};
\node[vertex] at (v14) {14};
\node[vertex] at (v15) {15};
\node[vertex] at (v16) {16};
\node[vertex] at (v17) {17};
\node[dom] at (v18) {18};
\node[vertex] at (v19) {19};
\node[dom] at (v20) {20};
\node[vertex] at (v21) {21};
\node[vertex] at (v22) {22};
\node[vertex] at (v23) {23};
\end{tikzpicture}
\caption{A programmatically generated planar embedding of $T$.  Blue vertices
form the dominating set $D_0$; the five red edges form $F_5$.}
\label{fig:T}
\end{figure}

\begin{lemma}\label{lem:gamma}
$\gammav(T)=8$.
\end{lemma}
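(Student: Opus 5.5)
The plan is to prove the two inequalities $\gammav(T)\le 8$ and $\gammav(T)\ge 8$ separately. For the upper bound I will use the blue set of Figure~\ref{fig:T}, $D_0=\{2,4,5,8,10,13,18,20\}$, and list closed neighbourhoods from $E(T)$:
\[
N[2]=\{0,2,6,7\},\ N[4]=\{1,3,4,9\},\ N[5]=\{1,5,10,11\},\ N[8]=\{3,8,13,14\},
\]
\[
N[10]=\{5,10,16,17\},\ N[13]=\{7,8,13,19\},\ N[18]=\{12,18,21,22\},\ N[20]=\{15,16,20,23\}.
\]
Their union is all of $\{0,\dots,23\}$, so $D_0$ dominates $T$ and $\gammav(T)\le 8$.

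For the lower bound, the trivial estimate is $|D|\ge 24/4=6$, since each closed neighbourhood has four vertices in a cubic graph. So I must exclude dominating sets of sizes $6$ and $7$. I would use the structure of the six quadrilateral faces. In the truncated octahedron each vertex lies on exactly one square, so the squares partition $V(T)$ into six blocks of four. Each vertex has two neighbours on its own square and one neighbour on another square.

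Hence a vertex of $D$ covers three vertices of its own square and exactly one ``external'' vertex. For a square $Q$ let $k_Q=|D\cap Q|$. If $k_Q=0$, then $Q$ needs four external covers. If $k_Q=1$, the vertex of $Q$ opposite the dominator needs one external cover. If $k_Q\ge 2$, no external cover is needed. Let $a,b,c$ be the numbers of squares with $k_Q=0$, $k_Q=1$ and $k_Q\ge 2$. Then
\[
|D|\ge b+2c \quad\text{and}\quad |D|\ge 4a+b,
\]
where the second inequality holds because $D$ supplies exactly $|D|$ external covers. Together with $a+b+c=6$, adding the two inequalities gives $|D|\ge 6+a$. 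Also, if $a=0$ then $|D|\ge 6+c$.

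This leaves only a few profiles for $|D|\in\{6,7\}$:
\begin{itemize}
\item $|D|=6$: $(a,b,c)=(0,6,0)$. Each square holds one dominator, and every external edge must land exactly on the opposite vertex of the target square.
\item $|D|=7$: $(a,b,c)=(0,5,1)$ or $(1,\ast,\ast)$ with small slack.
\end{itemize}
I would rule these out by using the explicit incidences between squares and external edges. The squares are $\{0,1,3,4\}$, $\{2,6,7,12\}$, $\{5,10,11,17\}$, $\{8,13,14,19\}$, $\{9,15,16,20\}$ and $\{18,21,22,23\}$. Adjacent squares of the octahedron are joined by one edge, and the pattern of which vertex sits opposite which forces contradictions. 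For example, in the $(0,6,0)$ case the dominator in a square fixes its opposite vertex, which must be hit by one specific cross-edge; propagating around a hexagon should give an inconsistency.

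The main obstacle is the $|D|=7$ case, where one unit of slack allows several sub-configurations. I would try the same propagation argument first. If it becomes unwieldy, I would fall back on an exhaustive check of all $\binom{24}{7}=346{,}104$ seven-sets, which by monotonicity also settles size $6$, as the paper's verifier does. This check can be reduced by symmetry, fixing one dominator in the vertex-transitive graph $T$.
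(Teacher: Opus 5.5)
You take a genuinely different route from the paper. Your counting is sound, but as written the lower bound is not proved by it: the residual profiles are never refuted. $D_0$ dominates, the six squares partition $V(T)$, the external edges form a perfect matching, and $|D|\ge b+2c$ and $|D|\ge 4a+b$ both hold. They are even sharper than you say. For $|D|=7$ the only profiles are $(0,5,1)$, with exactly two dominators in the doubled square, and $(1,3,2)$, where both inequalities are tight. So there is no ``$(1,\ast,\ast)$ with small slack'' family. ``Propagating around a hexagon should give an inconsistency'' is not an argument, so your lower bound currently rests entirely on the brute-force fallback. That fallback is valid; it is what the paper's C++ verifier does for sizes six and seven. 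But then the square analysis does no work.

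The missing ingredient is a structural fact you never state. For adjacent squares $Q,R$, let $v_Q(R)$ be the vertex of $Q$ whose external edge enters $R$. That edge is $v_Q(R)v_R(Q)$, and the vertex opposite $v_Q(R)$ in $Q$ is $v_Q(\bar R)$, where $\bar R$ is the antipodal square. The antipodal pairs are $\{0,1,3,4\},\{18,21,22,23\}$; $\{2,6,7,12\},\{9,15,16,20\}$; and $\{5,10,11,17\},\{8,13,14,19\}$. Hence if $Q$ has a single dominator $v_Q(\sigma(Q))$, then $v_{\overline{\sigma(Q)}}(Q)\in D$. With this, all three cases close by hand.

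In $(0,6,0)$, tightness forces each dominator's external neighbour to be the uncovered vertex of its target square, that is, $\sigma^2(Q)=\bar Q$. Then $\sigma$ is a permutation of six squares whose square is fixed-point-free, so every orbit has length four, which is impossible.

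In $(1,3,2)$, let $Z$ be the empty square. Each of the four squares $R$ adjacent to $Z$ must contain $v_R(Z)$ to dominate $v_Z(R)$. Suppose $m$ of these have $k_R=1$. Each of them forces $v_{\bar Z}(R)\in D$, so $k_{\bar Z}\ge m$. But every nonempty square has $k\in\{1,2\}$ here, so counting gives $k_{\bar Z}=7-(8-m)=m-1$, a contradiction.

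In $(0,5,1)$, let $P$ be the doubled square with equator $E_1E_2E_3E_4$, labelled so that $\sigma(\bar P)=E_1$. Then $\sigma(E_3)=\bar P$ and $v_P(E_3)\in D$. Now $\sigma(E_i)=P$ would need $\sigma(\bar P)=E_i$. Also $\sigma(E_i)=E_{i\pm1}$ would need $\sigma(E_{i\mp1})=E_i$, a backward chain that always runs into $\sigma(E_3)=\bar P$. So $\sigma(E_2)=\sigma(E_4)=\bar P$. This puts the three vertices $v_P(E_2),v_P(E_3),v_P(E_4)$ in $D$, contrary to $k_P=2$.

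The paper instead uses the bipartition, degree counting to force a $3+4$ split, and then a table obtained by enumerating the $220$ triples in each part. Completed as above, your square-and-matching count replaces that finite check by three short contradictions. It gives a fully hand-checkable lower bound and explains \emph{why} seven vertices cannot dominate.
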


\begin{proof}
The set
\[
D_0=\{2,4,5,8,10,13,18,20\}
\]
is dominating.  Indeed, the vertices outside $D_0$, followed by one neighbor
in $D_0$, are
\[
\begin{array}{c|rrrrrrrrrrrrrrrr}
v&0&1&3&6&7&9&11&12&14&15&16&17&19&21&22&23\\
d(v)&2&4&4&2&2&4&5&18&8&20&10&10&13&18&18&20.
\end{array}
\]
Thus $\gammav(T)\leq8$.  For completeness, the lower bound reduces to an
explicit finite check over triples in the two bipartition classes, rather than
an enumeration of all vertex subsets.  The graph is bipartite, with
parts
\begin{align*}
X&=\{0,4,5,6,7,8,15,16,17,18,19,23\},\\
Y&=\{1,2,3,9,10,11,12,13,14,20,21,22\}.
\end{align*}
If a dominating set of size seven existed, degree counting gives
$12-|D\cap X|\leq3|D\cap Y|$ and
$12-|D\cap Y|\leq3|D\cap X|$.  Hence its two part-sizes would be $3$ and $4$.

Fix a triple $A$ in one part and put $r=|N(A)|$.  To complete $A$ by four
vertices $B$ from the other part, $B$ must contain the $12-r$ vertices outside
$N(A)$.  Enumerating the $\binom{12}{3}=220$ triples and taking exact
neighborhood unions gives the following complete table; the same table results
after interchanging $X$ and $Y$.  These counts and the stated maximum reaches
are also checked by the independent Python verifier supplied with the source.
\begin{center}
\begin{tabular}{c@{\qquad}r@{\qquad}l}
\toprule
$r$ & number of triples $A$ & obstruction\\
\midrule
6 & 44 & $12-r>4$\\
7 & 60 & $12-r>4$\\
8 & 96 & every allowed $B$ reaches at most $8$ of the $9$ vertices in $X\setminus A$\\
9 & 20 & every allowed $B$ reaches at most $8$ of the $9$ vertices in $X\setminus A$\\
\bottomrule
\end{tabular}
\end{center}
The counts sum to $\binom{12}{3}=220$, and no $3+4$ split dominates.
Therefore no seven-set dominates $T$.  A smaller dominating set could be
extended to a dominating seven-set, so none exists either.  This proves the
claim.
\end{proof}

\section{The bondage number}

\begin{theorem}\label{thm:main}
The truncated octahedral graph satisfies $b(T)=5$.
Consequently, the planar bondage conjecture is false.
\end{theorem}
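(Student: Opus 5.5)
We prove the two inequalities $b(T)\le 5$ and $b(T)\ge 5$ separately, using Lemma~\ref{lem:gamma} throughout. Given that $\gammav(T)=8$, the conjecture fails as soon as $b(T)=5$: $T$ is connected, planar and cubic, so $\Deltaa(T)+1=4<5$.

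\emph{Upper bound.} For the upper bound, Hartnell and Rall's inequality (1) already gives $b(T)\le 3+3-1-0=5$ for any edge, since $T$ is bipartite and hence triangle-free. As a concrete certificate, we would instead exhibit the set $F_5$ of red edges in Figure~\ref{fig:T}. Its edges are exactly the edges at $2$ together with the edges $1$--$4$ and $1$--$5$. Then $\gammav(T-F_5)\ge 9$ follows by rerunning the argument of Lemma~\ref{lem:gamma} on $T-F_5$. In $T-F_5$ the vertex $2$ is isolated, so every dominating set contains $2$. Hence a dominating $8$-set would contain $2$ plus a $7$-set dominating $T-F_5-2$ apart from $2$. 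We would run the same bipartite degree-counting and triple enumeration on the modified neighborhoods, and the counting is now even tighter because several degrees drop. Alternatively we simply cite the exhaustive check over $8$-subsets performed by the verifier.

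\emph{Lower bound.} For the lower bound we must show that every $F$ with $|F|\le 4$ satisfies $\gammav(T-F)=8$. Monotonicity reduces this to $|F|=4$. We use the bundle reformulation of Section~\ref{sec:strategy}: $F$ fails to raise the domination number if and only if some minimum dominating set $D$ of $T$ survives, meaning that no vertex $x\notin D$ has $B(D,x)\subseteq F$. So the plan is as follows.
\begin{enumerate}
\item Enumerate all minimum dominating sets of $T$, i.e.\ all dominating $8$-sets. This is done with the same part-split analysis as in Lemma~\ref{lem:gamma}, now allowing splits $(4,4)$, $(3,5)$, $(5,3)$ and so on.
\item For each of the $\binom{36}{4}=58{,}905$ four-edge sets $F$, find a surviving $D$.
\end{enumerate}
To make the second step humanly checkable, we would exploit the automorphism group of $T$, which has order $48$. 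It cuts the $58{,}905$ sets down to a few thousand orbit representatives. A short structural argument can reduce further. A survivor fails only if some $x\notin D$ has all of its $D$-neighbors inside $F$. Since $F$ has $4$ edges, at most four such "threatened" vertices arise. So it suffices to find, for each $F$, a $D$ in which every vertex incident to $F$ is either in $D$ or has a $D$-neighbor via an edge outside $F$.

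\emph{Main obstacle.} The main obstacle is the lower bound. There is no slack: $5$ is exactly the Hartnell--Rall value, so the four-edge sets concentrated around a single vertex or a single edge are the hard cases. The family of minimum dominating sets must be rich enough to avoid them all. First we would classify $F$ by the subgraph it induces (star-like, path-like, matching, mixed) modulo symmetry. For each class we would then pick a dominating $8$-set from the symmetric orbit of $D_0$ that places the endpoints of $F$ inside $D$ or gives them private neighbors away from $F$. The full verification of this step rests on the exhaustive computer check.
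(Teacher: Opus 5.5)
Your proof is correct, and for the lower bound it is the paper's argument. The paper also reduces to $|F|=4$ by monotonicity, enumerates the $1{,}767$ dominating eight-sets, and checks every one of the $58{,}905$ four-edge sets for a survivor. Quotienting by the order-$48$ automorphism group only speeds up that check.

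The genuine difference is the upper bound. The paper certifies $b(T)\le 5$ computationally: it checks that none of the $1{,}767$ eight-sets dominates $T-F_5$, and exhibits $D_9$ to get $\gamma(T-F_5)=9$ exactly. You instead cite the Hartnell--Rall inequality (1), which for a triangle-free cubic graph gives $5$ with no computation and without knowing $\gamma(T)$. That is valid and cleaner, and it makes your sketch of ``rerunning'' the triple enumeration on $T-F_5$ unnecessary. If you want the explicit certificate anyway, no enumeration is needed. A dominating set $S$ of $T-F_5$ contains the isolated vertex $2$, and one of $0,1$, since $1$ is now a leaf at $0$. If $0\in S$, delete $2$; otherwise replace $\{1,2\}$ by $0$. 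Either way you get a dominating set of $T$ of size $|S|-1$, so $\gamma(T-F_5)>\gamma(T)$.

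A caution about the hand-checkable refinement you sketch: drawing survivors from the orbit of $D_0$ cannot work. Every vertex of $D_0$ has an external private neighbor: $0$ for $2$, $9$ for $4$, $11$ for $5$, $14$ for $8$, $17$ for $10$, $19$ for $13$, $12$ for $18$, and $15$ for $20$. Hence deleting just the three edges at any vertex $v$ destroys every image $gD_0$. If $v\notin gD_0$, then $v$ is isolated and undominated. If $v\in gD_0$, its private neighbor loses its only dominator. So this orbit does not even certify $b(T)\ge 4$.

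The star-like sets you rightly flag as the hard cases require dominating eight-sets $D\ni v$ in which $D\setminus\{v\}$ dominates $T-v$, and these lie outside that orbit. A human-readable proof would have to be organized around such sets. As it stands, your lower bound is sound only because you, like the paper, ultimately rely on the exhaustive check.
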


\begin{proof}
Let
\[
F_5=\{02,14,15,26,27\},
\]
where $uv$ denotes the edge with endpoints $u$ and $v$.  Exhausting the
dominating eight-sets of $T$ shows that none remains dominating in $T-F_5$.
On the other hand,
\[
D_9=\{0,2,4,6,8,10,13,18,20\}
\]
dominates $T-F_5$.  For a directly checkable certificate, the vertices
outside $D_9$, followed by a neighbor in $D_9$ through an undeleted edge, are
\[
\begin{array}{c|rrrrrrrrrrrrrrr}
v&1&3&5&7&9&11&12&14&15&16&17&19&21&22&23\\
d(v)&0&0&10&13&4&6&6&8&20&10&10&13&18&18&20.
\end{array}
\]
Any dominating eight-set of $T-F_5$ would also dominate $T$, since restoring
edges cannot destroy domination.  The exhaustive check over the $1{,}767$
dominating eight-sets of $T$ therefore proves that $T-F_5$ has no dominating
eight-set.  Thus $\gammav(T-F_5)=9$, and $b(T)\leq5$.

It remains to exclude a bondage set with at most four edges.  There are exactly
$1{,}767$ dominating eight-sets in $T$.  For every one of the
$\binom{36}{4}=58{,}905$ four-edge sets $F$, the verifier tests these sets
directly in $T-F$.  The minimum number that survive, over all such $F$, is
five.  Hence $\gammav(T-F)=8$ for every four-edge set $F$.  The same follows
for fewer than four deleted edges: if $F'\subseteq F$, then every set
dominating $T-F$ also dominates $T-F'$ after the edges in $F\setminus F'$ are
restored.  Extend $F'$ to any four-edge set $F$ and use the surviving
eight-set for $T-F$.
Therefore $b(T)\geq5$.

The computation uses only integer bit masks.  For each candidate $D$, it forms
the union of the closed neighborhoods of vertices of $D$ after the indicated
edge deletions and compares it with the $24$-bit all-ones mask.  It also
independently checks that there are no dominating six- or seven-sets, enumerates
all $1{,}767$ dominating eight-sets, checks all four-edge sets, and verifies the
displayed five-edge and nine-vertex witnesses.  Thus the claimed values do not
depend on a graph library, a solver, randomization, or floating-point arithmetic.
The complete C++20 verifier is included in the source archive.

Finally $T$ is cubic, so $\Deltaa(T)+1=4<5=b(T)$.
\end{proof}

\section{Reproducibility}

{\small\raggedright
The source archive additionally contains
\nolinkurl{planar_bondage_search.cpp} (the bundle search),
\nolinkurl{verify_planar_bondage_independent.py} (an independently written
pure-Python verifier using the bundle characterization),
\nolinkurl{generate_planar_figure.py} (the exact embedding and crossing check),
\nolinkurl{run_planar_bondage_hpc.sh} (the partitioned census driver), the
graph6 witness, and a README with compile commands and expected output.  The
direct verifier uses only the C++ standard library and 64-bit integer masks.
It verifies the edge list and degree sequence, checks that no six- or
seven-set dominates, enumerates the $1{,}767$ dominating eight-sets, checks all
$58{,}905$ four-edge sets, and checks both explicit witnesses in
Theorem~\ref{thm:main}.  On an Apple M5 it completes in under one second.
\par}

The discovery census was run under Red Hat Enterprise Linux 9.8 on a server
with two Intel Xeon E5-2697 v3 processors (28 physical cores, 56 hardware
threads, 70 MB aggregate L3 cache) and 128 GB RAM, using GCC 11.5.0,
\textsc{plantri} 4.5, and \textsc{nauty} 2.9.3.  Two NVIDIA Quadro RTX 8000
GPUs (48 GB each) were available but were not used: the workload consists of
irregular branching and small integer bit-mask operations.  The independent
verification was compiled with Apple Clang 21.0.0 and run under macOS 26.5.2
on a 10-core Apple M5 MacBook Air with 16 GB RAM.  Identical numerical output
was obtained on both architectures.

For clarity, the decisive finite check is summarized by the following
unnumbered pseudocode.  Here $N_F[v]$ is the closed neighborhood of $v$ after
deleting the edges in $F$, represented by a 24-bit mask.
\begin{quote}
\small
Set $\mathcal D_8\leftarrow\varnothing$.\par
For every $D\subseteq V(T)$ with $|D|\in\{6,7,8\}$, compute
$C(D,\varnothing)=\bigcup_{v\in D}N_{\varnothing}[v]$.  Record the number of
sets for which $C(D,\varnothing)=V(T)$, and store every such eight-set in
$\mathcal D_8$.\par
For every $F\subseteq E(T)$ with $|F|=4$, count the sets
$D\in\mathcal D_8$ for which
$C(D,F)=\bigcup_{v\in D}N_F[v]=V(T)$.  Reject if this count is zero.\par
For the displayed set $F_5$, verify that the corresponding count is zero and
verify directly that $C(D_9,F_5)=V(T)$.\par
Accept only if the three domination counts are $0,0,1767$, all $58{,}905$
four-edge sets have positive count, and both tests for $F_5$ succeed.
\end{quote}

\section*{Computational and writing assistance}

OpenAI Codex was used interactively to assist with search-code development,
independent-check design, and language editing.  Every mathematical claim in
this article was checked against the explicit graph and the deterministic
verifiers described above; the author takes responsibility for the content.

\end{document}